\theoremstyle{plain}
\newtheorem{theorem}{Theorem}
\newtheorem{corollary}[theorem]{Corollary}
\newtheorem{lemma}[theorem]{Lemma}
\newtheorem{proposition}[theorem]{Proposition}
\theoremstyle{definition}
\newtheorem{example}[theorem]{Example}
\newtheorem{remark}[theorem]{Remark}
\newcommand{\bbN}{\mathbb N}
\newcommand{\bbF}{\mathbb F}
\newcommand{\N}{{\mathbb N}}
\newcommand{\tr}[1]{\mathrm{trace}\left(#1\right)}
\newcommand{\rank}{\operatorname{rank}}
\newcommand{\Mat}{\operatorname{\mathbb{M}}}
\begin{document}

\title{Nil-clean companion matrices} 
\author{Simion Breaz} 
\address{Babe\c s-Bolyai University, Faculty of Mathematics
and Computer Science, Str. Mihail Kog\u alniceanu 1, 400084
Cluj-Napoca, Romania}
\email{bodo@math.ubbcluj.ro}

\author{George Ciprian Modoi}
\address{Babe\c s-Bolyai University, Faculty of Mathematics
and Computer Science, Str. Mihail Kog\u alniceanu 1, 400084
Cluj-Napoca, Romania}
\email{cmodoi@math.ubbcluj.ro}

\date{\today}

\subjclass[2000]{15A24, 15A83, 16U99}

\thanks{This research is supported by the grant UEFISCDI grant PN-II-ID-PCE-2012-3-0100.}

\begin{abstract}
We describe nil-clean companion matrices over fields. 
\end{abstract}

\keywords{}

\maketitle


\section{Introduction}

Nicholson proved in \cite{Nic-tams-77} that in the study of some important properties of rings it is important to know when we can decompose
some (or all) elements of a ring as a sum of an idempotent and an element with some special properties. For instance, an element $r$ of a 
ring $R$ is \begin{itemize} \item \textsl{clean} if $r=u+e$, where $e$ is idempotent and $u$ is a unit of $R$, \cite{Nic-tams-77}; 
\item \textsl{weakly-clean} if $r=u\pm e$, where $e$ is idempotent and $u$ is a unit of $R$, \cite{Ahn-An-06}; 
    
\item \textsl{nil-clean} if $r=n+e$, where $e$ is idempotent and $n$ is nilpotent, \cite{Diesl-13};

\item \textsl{weakly nil-clean} if $r=n\pm e$, where $e$ is idempotent and $n$ is nilpotent, \cite{Da-Mc-15}.
\end{itemize}
The classes of clean and nil-clean rings are closed with respect standard constructions, e.g. direct products of 
(nil-)clean rings
are (nil-)clean, and matrix rings over (commutative nil-)clean rings are 
(nil-)clean, \cite{HN} (resp. \cite{Br-et-al-13}). However, the classes of weakly (nil-)clean rings are not closed under 
these constructions (see \cite{Ahn-An-06} and \cite{Br-Da-Zh}).
Moreover, while all matrix rings over fields are clean, when we consider nil-clean rings there are strongly restrictions: 
if a matrix ring over a division ring $F$ is nil-clean then 
$F$ has to be isomorphic to $\mathbb{F}_2$, \cite{kosan-et-al-14}. 

It can be useful to know the (nil-)clean elements in some
rings which are not (nil-)clean. For instance, strongly clean matrices
(i.e. they have a decomposition $r=u+e$ such that $eu=ue$) are characterized in \cite[Theorem 3]{Nic-99} for general rings and in 
\cite[Theorem 37]{BDD} for matrices over commutative local rings are studied (we reffer to \cite{CYZ06} for a particular case). 
In particular it would be also nice to characterize nil-clean elements in matrix rings over division rings. 
For the case of strongly nil-clean elements (i.e. they have a decomposition $r=e+n$ such that $en=ne$) we refer to 
\cite[Theorem 4.4]{Diesl-13}.
From this result we conclude that an $n\times n$ matrix over a division ring $D$ is strongly nil-clean if and only if its characteristic polynomial has the form $X^{(n-m)}(X-1)^m$. For other studies of (nil-)clean elements in various rings we refer to 
\cite{Aro} and \cite{C14}.            

Since in the proofs of the fact that the matrix ring $\Mat_n(\bbF)$ over the field $\bbF$ 
(resp. $\bbF=\mathbb{F}_2$) is (nil-)clean the Frobenius 
(rational) normal form can be used, cf. \cite{Nic-pams-78}, \cite{Ya-Zh} and \cite{Br-et-al-13}, it is useful to know 
when a companion matrix is nil-clean. In the main result of the present paper (Theorem \ref{main-comp}) we characterize companion matrices over fields which are nil-clean. As a corollary of the main result we obtain a characterization for finite prime fields by the fact that every big enough companion matrix 
is nil-clean (Corollary \ref{prime-fields}). Moreover, it is proved 
that all these matrices have nil-clean like decompositions:
 for every polynomial $\chi$ of degree $n$ such that the coefficient of $X^{n-1}$ is 0, all { big enough companion} matrices can be decomposed as 
$A=E+B$ where $E$ is an idempotent and $B$ is a matrix whose characteristic polynomial is $\chi$. We can view this result as a partial answer to the general (open) problem, which is connected to the main result of \cite{camilo-02}, which asks to characterize the matrices
$A$ over  $\bbF$ which can be decomposed $A=B+C$ such that $\chi_B=f$ and $g(C)=0$, where
$f$ and $g$ are two fixed polynomials over a field $\bbF$. 
 In fact the problem 
of decompositions of a companion matrix in this way can be reduced to a matrix completion problem (for nil-clean decomposition 
we used Lemma \ref{prescr-char} and 
Proposition \ref{c=e+m}). We refer to \cite{Cr-09} for a nice survey on this subject.

In this paper, $\bbF$ will denote a field, $\Mat_n(\bbF)$ is the ring of all $n\times n$ matrices over $\bbF$, and $\bbF_p$ is the prime 
field of characteristic $p$ ($p$ is a prime). If $A\in\Mat_n(\bbF)$ we will denote by $\chi_A=\det(XI-A)$ the characteristic polynomial of $A$.

\section{Matrix completion results which involve idempotents}\label{section-comp}

Let $\mathbb{F}$ be a field and $n$ a positive integer. For every tuple $(c_0,c_1,\dots,c_{n-1})\in \mathbb{F}^n$, we denote by  
\[\tag{C} C=C_{c_0,c_1,\dots,c_{n-1}}=\left[\begin{array}{ccccc}
          0&0&\cdots&0&-c_0\\
          1&0&\cdots&0&-c_1\\
          0&1&\cdots&0&-c_2\\
          \cdots&\cdots&\cdots&\cdots&\cdots\\
          0&0&\cdots&0&-c_{n-2}\\
          0&0&\cdots&1&-c_{n-1}
         \end{array}\right]\] the $n\times n$  
companion matrix determined by $(c_0,c_1,\dots,c_{n-1})$. The characteristic polynomial associated to $C$ is denoted by 
$$\chi_{c_0,c_1,\dots,c_{n-1}}=X^n+c_{n-1}X^{n-1}+\cdots+c_1 X+c_0.$$

We are grateful to an anonymous referee for pointing out that the matrix in the following lemma not only have the same 
characteristic polynomial, but is even similar to the respective companion matrix:

\begin{lemma}\label{prescr-char} Let  $f=X^n+f_{n-1}X^{n-1}+\ldots+f_1X+f_0\in\bbF[X]$ be a monic polynomial. 
For every tuple  $(c_1,\dots,c_{n-1})\in \bbF^{n-1}$ there exists a unique tuple
$(\alpha_0,\alpha_1,\ldots,\alpha_{n-1})\in \bbF^n$ such that the matrix
\[M=\left[\begin{array}{ccccc}
          -\alpha_{n-1}&-\alpha_{n-2}&\cdots&-\alpha_1&-\alpha_0\\
          1&0&\cdots&0&-c_1\\
           0&1&\cdots&0&-c_2\\
          \cdots&\cdots&\cdots&\cdots&\cdots\\
          0&0&\cdots&0&-c_{n-2}\\
          0&0&\cdots&1&-c_{n-1}
         \end{array}\right]\in\Mat_{n}(\bbF)\] 
				is similar to the companion matrix $C_{f_0,f_1,\dots,f_{n-1}}$ of $f$.  
\end{lemma}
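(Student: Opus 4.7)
The plan is to exhibit $e_1 = (1, 0, \ldots, 0)^{T}$ as a cyclic vector for $M$, regardless of how the $\alpha_i$'s are chosen. This forces $M$ to be similar to the companion matrix of its own characteristic polynomial $\chi_M$, so the problem reduces to choosing the $\alpha_i$'s so that $\chi_M = f$.

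First I would compute the action of $M$ on the standard basis. Reading off the columns gives
\[ M e_k = -\alpha_{n-k}\, e_1 + e_{k+1} \quad (1 \le k \le n-1), \qquad M e_n = -\alpha_0\, e_1 - \sum_{k=1}^{n-1} c_k\, e_{k+1}. \]
Defining $p_0(X) = 1$ and $p_k(X) = X p_{k-1}(X) + \alpha_{n-k}$ for $1 \le k \le n-1$, a short induction on $k$ yields $e_{k+1} = p_k(M)\, e_1$. Hence the Krylov vectors $e_1, M e_1, \ldots, M^{n-1} e_1$ span the standard basis, so $e_1$ is cyclic, $M$ is non-derogatory, and its minimal polynomial coincides with $\chi_M$.

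Next, applying $M$ to the identity $e_n = p_{n-1}(M)\, e_1$ and comparing with the explicit formula for $M e_n$ shows that the monic polynomial
\[ q(X) \;=\; X\, p_{n-1}(X) + \alpha_0 + \sum_{k=1}^{n-1} c_k\, p_k(X) \]
of degree $n$ satisfies $q(M)\, e_1 = 0$. Since $e_1$ is cyclic, this forces $q = \chi_M$. Now I would impose $\chi_M = f$ and extract the coefficient of $X^m$ for $m = n-1, n-2, \ldots, 0$ in turn. Because $p_k$ has degree $k$ with leading coefficient $1$ and subleading coefficients $\alpha_{n-1}, \alpha_{n-2}, \ldots$, the equation at level $X^m$ expresses $\alpha_m$ in terms of the coefficients of $f$, the $c_i$'s, and the already-determined $\alpha_{m+1}, \ldots, \alpha_{n-1}$. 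This triangular system has a unique solution via back-substitution.

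The main obstacle I anticipate is purely bookkeeping: verifying rigorously the triangular form of the coefficient-wise system, that is, checking that $\alpha_m$ really appears with coefficient $1$ on the $X^m$-line and that no earlier $\alpha_j$ with $j < m$ intrudes. Conceptually nothing more is required, since once $e_1$ has been shown to be cyclic, similarity of $M$ to $C_f$ follows at once from the equality of minimal and characteristic polynomial; no change-of-basis matrix needs to be written down.
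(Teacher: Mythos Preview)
Your argument is correct and shares the same two-step skeleton as the paper's proof: show that $e_1$ is a cyclic vector (equivalently, that the minimal and characteristic polynomials of $M$ coincide), and then verify that the equations $\chi_M=f$ form a triangular system in $\alpha_{n-1},\dots,\alpha_0$.

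The packaging differs in a way worth noting. The paper treats the two steps separately: it first computes $\chi_M$ by expanding $\det(XI_n-M)$ along the first row, obtaining $\chi_M=(X+\alpha_{n-1})\chi_{c_1,\dots,c_{n-1}}+\alpha_{n-2}\chi_{c_2,\dots,c_{n-1}}+\cdots+\alpha_0$, reads off the triangular system from this, and only afterwards checks cyclicity by observing that the first column of $M^k$ has a $1$ in position $k+1$. You instead let the cyclic-vector relation do double duty: the recursion $e_{k+1}=p_k(M)e_1$ both certifies cyclicity and, via $q(M)e_1=0$, hands you $\chi_M=Xp_{n-1}+\alpha_0+\sum_k c_k p_k$ without any determinant. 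Since $p_k=X^k+\alpha_{n-1}X^{k-1}+\cdots+\alpha_{n-k}$, the coefficient of $X^m$ in $q$ is $\alpha_m+c_m+\sum_{k>m}c_k\alpha_{n-k+m}$, exactly the paper's triangular system. Your route is a little more economical; the paper's determinant expansion is perhaps more familiar to a reader who has not thought about Krylov sequences. Neither approach requires anything the other does not.
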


\begin{proof} 
First we claim that the $n$-tuple $(\alpha_0,\alpha_1,\ldots,\alpha_{n-1})\in\bbF^n$ is uniquely determined by the equality 
$\chi_M=f$.  
The characteristic polynomial of $M$ is 
\[\chi_M=\det(XI_n-M)=\left|\begin{array}{ccccc}
          X+\alpha_{n-1}&\alpha_{n-2}&\cdots&\alpha_1&\alpha_0\\
          -1&X&\cdots&0&c_1\\
          \cdots&\cdots&\cdots&\cdots&\cdots\\
          0&0&\cdots&X&c_{n-2}\\
          0&0&\cdots&-1&X+c_{n-1}
         \end{array}\right|.\]
Expanding it after the first line we obtain after some immediate computations:
				\begin{align*}\chi_M&=(X+\alpha_{n-1})\left|\begin{array}{cccc}
          X&\cdots&0&c_1\\
          \cdots&\cdots&\cdots&\cdots\\
          0&\cdots&X&c_{n-2}\\
          0&\cdots&-1&X+c_{n-1}
         \end{array}\right|\\ 
				&+\alpha_{n-2}\left|\begin{array}{cccc}
          X&\cdots&0&c_2\\
          \cdots&\cdots&\cdots&\cdots\\
          0&\cdots&X&c_{n-2}\\
          0&\cdots&-1&X+c_{n-1}
         \end{array}\right|+\ldots
				+\alpha_3\left|\begin{array}{ccc}
          X&0&c_{n-3}\\
          -1&X&c_{n-2}\\
					0&-1&X+c_{n-1}
         \end{array}\right|\\ 
				&+\alpha_2\left|\begin{array}{cc}
          X&c_{n-2}\\
          -1&X+c_{n-1}\\
         \end{array}\right|+\alpha_1(X+c_{n-1})+\alpha_0\\
				&=(X+\alpha_{n-1})\chi_{c_1,\dots,c_{n-1}}+\alpha_{n-2}\chi_{c_2,\dots,c_{n-1}}+\ldots+\alpha_1\chi_{c_{n-1}}+\alpha_0.
	\end{align*}
Therefore $\chi_M$ can be written as: 
\begin{align*}\chi_M&=X^n+(\alpha_{n-1}+c_{n-1})X^{n-1}+(\alpha_{n-2}+\alpha_{n-1}c_{n-1}+c_{n-2})X^{n-2}+\ldots \\
&+(\alpha_{n-i}+\alpha_{n-i+1}c_{n-1}+\ldots+\alpha_{n-1}c_{n-i+1}+c_{n-i})X^{n-i}+\ldots\\
&+(\alpha_1+\alpha_2c_{n-1}+\ldots+\alpha_{n-1}c_2+c_1)X
+(\alpha_0+\alpha_1c_{n-1}+\ldots+\alpha_{n-1}c_1+c_0),\end{align*}
where $c_0=0$ is added only for uniformity.  

The system \[(\alpha_{n-i}+\alpha_{n-i+1}c_{n-1}+\ldots+\alpha_{n-1}c_{n-i+1}+c_{n-i})=f_{n-i},\ { 1\leq i\leq n}\]
with the unknowns $\alpha_{0},\alpha_1,\ldots,\alpha_{n-1}$ has obviously a unique solution, proving the claim made at the 
very beginning of this proof. 

Finally we observe that the minimal polynomial of the matrix $M$ is equal to its characteristic polynomial. 
Indeed by direct computation we can see that { for every $1\leq k<n$ the} first column of the matrix 
$M^k$ has $1$ on the position $(k+1,1)$ and only 
$0$ bellow, that is on all positions $(i,1)$ with $k+1<i\leq n$. This implies the matrix obtained by evaluating at $M$ any monic 
polynomial of degree $k<n$ has an entry $1$ on the position $(k+1,1)$, hence it is not zero. Therefore the minimal 
polynomial of $M$ is of degree at least $n$, hence it is forced to coincide with $\chi_M$. 
In these conditions we only have to note the equality $\chi_M=f$ is enough in order to conclude that $M$ 
is similar to the companion matrix of $f$. \end{proof}

\begin{remark}\label{fixed-alpha}
 In Lemma \ref{prescr-char} above we may want $\alpha_{n-1},\ldots,\alpha_{n-i}$ to be fixed, $1\leq i\leq n$, and we can determine uniquely 
$f_{n-1},\ldots,f_{n-i}$ and $\alpha_{n-i-1},\ldots,\alpha_0$ such that $M$ is similar to the companion matrix of the polynomial $f$. 
In the following we will make use of a 
particular case of this remark, namely when $i=1$ and $\alpha_{n-1}$ is fixed. Noting that $f_{n-1}=\alpha_{n-1}+c_{n-1}$ we will
determine  
$\alpha_0,\alpha_1,\ldots,\alpha_{n-2}$ such that 
$\chi_M=X^n+(\alpha_{n-1}+c_{n-1})X^n+f_{n-2}X^{n-2}+\ldots+f_1X+f_0$, where $f_0,f_1,\ldots,f_{n-2}$ are arbitrary in $\bbF$.
\end{remark}

\begin{proposition}\label{c=e+m} Let $n=m+k$ be a positive integer, where $m,k\in\bbN^*$. 
Fix constants $c_0,c_1,\dots,c_{n-1}\in\bbF$ and denote as above $C=C_{c_0,c_1,\dots,c_{n-1}}$. For every polynomial 
$g\in\bbF[X]$ of degree at most $n-2$
there exist two matrices $E,M\in \Mat_{n}(\bbF)$ such that 
\begin{enumerate}[{\rm (1)}]
\item $C=E+M$;
\item $E$ is idempotent, and
\item $\chi_M=X^n+(k1+c_{n-1})X^{n-1}+g$.
\end{enumerate}
\end{proposition}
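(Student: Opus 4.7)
The approach is to produce the idempotent $E$ as a specific block matrix containing enough free parameters, and then show that the characteristic polynomial of $M := C - E$, viewed as a function of those parameters, is affine and covers all admissible target polynomials.

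Concretely, I would set
$$E = \begin{pmatrix} I_k & B \\ 0 & 0 \end{pmatrix}$$
in block form corresponding to $n = k+m$, where $B$ is a free $k\times m$ block. A direct block product gives $E^2 = E$, and $\operatorname{tr}(E) = k$. Hence $\operatorname{tr}(M) = -c_{n-1} - k$, so the coefficient of $X^{n-1}$ in $\chi_M$ is automatically $k\cdot 1 + c_{n-1}$, regardless of $B$. Conditions (1)--(2) together with the leading $X^{n-1}$-coefficient required by (3) are already met; what remains is to choose $B$ so that the coefficients of $X^0,\ldots,X^{n-2}$ in $\chi_M$ agree with those of $g$.

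The decisive observation is that $\chi_M(X) = \det(XI_n - C + E)$ is an \emph{affine} function of the entries of $B$. Indeed, $B$ only appears in the top-right $k\times m$ block of $XI_n - C + E$, while the bottom-left $m\times k$ block (inherited from $-C$) has the single nonzero entry $-1$ at position $(k+1,k)$. In the Leibniz expansion, any monomial using a top-right entry must be balanced by a bottom-left entry to obtain a valid permutation, and only one such entry is available; consequently at most one factor of $B$ appears in each monomial. This yields
$$\chi_M(X) = \chi_M|_{B=0}(X) + \sum_{i=1}^{k}\sum_{j=1}^{m} B_{ij}\,D_{ij}(X),$$
with $B$-independent coefficient polynomials $D_{ij}$.

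The main obstacle is then to prove that the $D_{ij}$ span the space of polynomials of degree $\leq n-2$. Applying the same one-crossing analysis to the cofactor at position $(i, k+j)$ of $XI_n - C + E|_{B=0}$, I expect $D_{ij}(X)$ to factor up to sign as $(X+1)^{i-1}\,\rho_j(X)$, where $\rho_j(X)$ is the $(m-1)\times(m-1)$ minor of $XI_m - R$ obtained by deleting its first row and $j$-th column, and $R$ is the bottom-right $m\times m$ companion submatrix of $C$. A short calculation gives $\deg \rho_j = m-j$ with nonzero leading coefficient, so $\{\rho_j : 1 \leq j \leq m\}$ is a basis of polynomials of degree $\leq m-1$; similarly $\{(X+1)^{i-1} : 1\leq i\leq k\}$ is a basis of polynomials of degree $\leq k-1$. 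Since the $\bbF$-span of pairwise products $pq$ with $\deg p \leq k-1$ and $\deg q \leq m-1$ equals the space of polynomials of degree $\leq (k-1)+(m-1) = n-2$, the $D_{ij}$ span this target space, and one can solve for $B$ so that $\chi_M = X^n + (k\cdot 1 + c_{n-1})X^{n-1} + g$ for any prescribed $g$ of degree at most $n-2$. The fiddliest part is the sign bookkeeping in the cofactor computation, but no deeper obstacle arises because the relevant minors split into a lower-triangular piece in the top-left and a companion-type piece in the bottom-right.
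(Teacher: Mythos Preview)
Your argument is correct and takes a genuinely different route from the paper's.

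The paper restricts $B$ to a sparse $(n-1)$-parameter family (only the last row and last column of $E_{12}$ are allowed to be nonzero) and proves by induction on $k$: expanding $\det(XI_n-M)$ along the first column yields $(X+1)$ times an $(n-1)\times(n-1)$ determinant of the same shape with $k$ replaced by $k-1$, and the base case $k=1$ is Lemma~\ref{prescr-char}. This gives not only existence but \emph{uniqueness} of the parameters $\alpha_0,\dots,\alpha_{n-2}$.

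You instead keep all $km$ entries of $B$ free and argue directly. Your key structural observation---that $\chi_M$ is affine in $B$ because the bottom-left $m\times k$ block of $XI_n-C+E$ has a single nonzero entry---is correct and pleasant. Your factorization $D_{ij}=\pm(X+1)^{i-1}\rho_j$ does hold: after deleting row $i$ and column $k+j$, rows $i+1,\dots,n$ vanish on columns $1,\dots,i-1$, so the minor splits as $(X+1)^{i-1}\cdot\det Q$; then peeling off columns $i,\dots,k+j-1$ of $Q$ (each having a single $-1$ in the remaining rows) leaves exactly the bottom-right $(m-j)\times(m-j)$ corner of $XI_n-C$, whose determinant is monic of degree $m-j$ and agrees up to sign with your $\rho_j$. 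The spanning argument via products of two triangular bases is then routine. What you gain is a self-contained proof that bypasses both the induction and Lemma~\ref{prescr-char}; what you lose is the uniqueness statement, since your parameter space is $km$-dimensional while the target is only $(n-1)$-dimensional. The ``I expect'' in your write-up should be upgraded to a verified claim, but the verification is exactly the block-triangular splitting you mention at the end.
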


\begin{proof}
Let $g=f_{n-2}X^{n-2}+\ldots+f_1X+f_0\in\bbF[X]$.
Consider the matrix $$E=\left[\begin{array}{cc}
          E_{11}&E_{12}\\
          E_{21}&E_{22}\\
         \end{array}\right]\in\Mat_n(\bbF),$$ where $E_{11}=I_k\in\Mat_{k}(\bbF)$, $E_{21}$ and $E_{22}$ are both $0$
(in $\Mat_{m\times k}(\bbF)$, respectively $\Mat_{m}(\bbF)$) and 
\[E_{12}=\left[\begin{array}{ccccc}
          0&0&\cdots&0&\alpha_0-c_0\\
          0&0&\cdots&0&\alpha_1-c_1\\
          \cdots&\cdots&\cdots&\cdots&\cdots\\
          0&0&\cdots&0&\alpha_{k-2}-c_{k-2}\\
          \alpha_{n-2}&\alpha_{n-3}&\cdots&\alpha_k&\alpha_{k-1}-c_{k-1}
         \end{array}\right]\] where $\alpha_0,\alpha_1,\ldots,\alpha_{n-2}\in\bbF$.
Then it may be immediately verified that $E=E^2$ is an idempotent. 
 We will prove by induction on $k\geq1$ that there are uniquely 
determined $\alpha_0,\alpha_1,\ldots,\alpha_{n-2}$ such that $M=C-E$ has the characteristic polynomial 
equal to $f=X^n+(k1+c_{n-1})X^{n-1}+g$. The step $k=1$ is just Lemma \ref{prescr-char} (with fixed $\alpha_{n-1}=1$ and $f_{n-1}=1+c_{n-1}$, 
as in Remark \ref{fixed-alpha}). 

Now suppose the claim is true for $k-1\geq1$, and let $M=C-E$ as before. Expanding after the first column we compute:
\begin{align*}
\chi_M&=\det(XI_n-M) \\ &=\left|\begin{array}{cccccccc}         
          X+1&0&\cdots&0&0&\cdots& 0 &\alpha_0\\ 
          -1&X+1&\cdots&0&0&\cdots& 0 &\alpha_1\\
          \cdots&\cdots&\cdots&\cdots&\cdots&\cdots& \cdots &\cdots\\
          0&0&\cdots&X+1&\alpha_{n-2}&\cdots& \alpha_{k-2} &\alpha_{k-1}\\
          0&0&\cdots&-1&X&\cdots & 0&c_k\\
          \cdots&\cdots&\cdots&\cdots&\cdots&\cdots&\cdots &\cdots\\
					0&0&\cdots&0&0&\cdots& X & c_{n-2}\\
          0&0&\cdots&0&0&\cdots& -1 & X+c_{n-1}
         \end{array}\right|\\
  &=(X+1)\left|\begin{array}{ccccccc}          
          X+1&\cdots&0&0&\cdots& 0 &\alpha_1\\
          \cdots&\cdots&\cdots&\cdots&\cdots&\cdots&\cdots\\
          0&\cdots&X+1&\alpha_{n-2}&\cdots& \alpha_{k-2}&\alpha_{k-1}\\
          0&\cdots&-1&X&\cdots& 0 &c_k\\
          \cdots&\cdots&\cdots&\cdots&\cdots&\cdots &\cdots\\
					0&\cdots&0&0&\cdots& X & c_{n-2}\\
          0&\cdots&0&0&\cdots&-1& X+c_{n-1}
         \end{array}\right|+\alpha_0.
\end{align*}  
Clearly 
the coefficient of $X^{n-1}$ in $\chi_M$ is $k1+c_{n-1}$.

By division with remainder theorem we obtain $f=(X+1)q+r$ with $\deg r\leq0$, and $\chi_M=f$ if and only if 
\[q=\left|\begin{array}{cccccc}          
          X+1&\cdots&0&0&\cdots&\alpha_1\\
          \cdots&\cdots&\cdots&\cdots&\cdots&\cdots\\
          0&\cdots&X+1&\alpha_{n-2}&\cdots&\alpha_{k-1}\\
          0&\cdots&-1&X&\cdots&c_k\\
          \cdots&\cdots&\cdots&\cdots&\cdots&\cdots\\
          0&\cdots&0&0&\cdots&X+c_{n-1}
         \end{array}\right|\]  and 
$r=\alpha_0$.  Then the determinant giving $q$ is the same form as $\det(XI_n-M)$ but of dimension 
$(n-1)\times(n-1)$ where $n-1=(k-1)+m$ and the coefficient of $X^{n-2}$ is $(k-1)1+c_{n-1}$.
We apply induction hypothesis in order to determine (uniquely) $\alpha_1,\cdots,\alpha_{n-2}$. 
\end{proof}

\section{Nil-clean companion matrices}

We will use the results proved in the previous section to give a complete description of nil-clean companion matrices
over fields. We start with 
a result which is valid for all matrices. Recall that an element of a ring is \textsl{unipotent} if it is a sum $1+b$ 
such that $b$ is nilpotent. 

\begin{lemma}\label{e=i}
Let $A\in\Mat_{n}(\bbF)$ be a (not necessarily companion) matrix which is nil-clean. The following are true:
\begin{enumerate}[{\rm (1)}]
\item There exists a non-negative integer $k$ such that $\tr A=k\cdot 1$. 
\item If $\mathrm{char}(\bbF)=0$ and $\tr A=k\cdot 1$ then  
\begin{enumerate}[{\rm (a)}]
\item $k\leq n$;
\item $k=0$ if and only if $A$ is nilpotent;
\item $k=n$ if and only if $A$ is unipotent.
\end{enumerate}
\item If $\mathrm{char}(\bbF)=p>0$ then 
\begin{enumerate}[{\rm (a)}] 
\item there exists $k\in\{1,\dots,p\}$ such that $\tr A=k\cdot 1$, and $k\leq n$ {or $k=p$};
\item { if $n<k=p$ then $A$ is nilpotent;}
\item if $k=n<p$ then $A$ is unipotent;
\item 
if $k=n= p$ then $A$ is unipotent or nilpotent.
\end{enumerate} 
\end{enumerate} 
\end{lemma}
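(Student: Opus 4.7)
The plan is to reduce everything to two basic observations: (i) any nilpotent $N\in\Mat_n(\bbF)$ has $\tr N=0$ (all eigenvalues, in an algebraic closure, are $0$, so $\chi_N=X^n$); and (ii) any idempotent $E\in\Mat_n(\bbF)$ is diagonalizable (its minimal polynomial divides the separable polynomial $X(X-1)$), hence similar to $\mathrm{diag}(I_r,0_{n-r})$ where $r=\rank E\in\{0,1,\dots,n\}$, so $\tr E=r\cdot 1$. Writing $A=E+N$ with $E^2=E$ and $N$ nilpotent, we get
\[
\tr A=\tr E+\tr N=r\cdot 1.
\]
This immediately yields (1), with the non-negative integer being $k=r=\rank E$.

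For (2), when $\mathrm{char}(\bbF)=0$, the map $\bbZ_{\geq 0}\to\bbF$, $m\mapsto m\cdot 1$, is injective, so $\tr A=k\cdot 1$ forces $k=r$; in particular $k\le n$. If $k=0$, then $r=0$, so $E=0$ and $A=N$ is nilpotent; conversely, any nilpotent matrix has trace $0$ by (i). If $k=n$, then $E$ has rank $n$, hence $E=I_n$ (the only rank-$n$ idempotent), and $A=I_n+N$ is unipotent; the converse is likewise immediate.

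For (3), with $\mathrm{char}(\bbF)=p$, the element $r\cdot 1\in\bbF$ depends only on $r\bmod p$, and the set $\{1,\dots,p\}$ is a complete system of representatives (with $k=p$ representing $0$). If $n\geq p$ we may always pick such a $k$ with $k\le p\le n$; if $n<p$, then $r\in\{0,\dots,n\}$, and we take $k=r$ when $r\geq 1$ (so $k\leq n$) or $k=p$ when $r=0$; this proves (3a). For (3b), $n<k=p$ gives $\tr A=0$ with $r\le n<p$, forcing $r=0$, hence $E=0$ and $A$ is nilpotent. For (3c), $k=n<p$ gives $r\cdot 1=n\cdot 1$ with both $r,n<p$, so $r=n$, hence $E=I_n$ and $A$ is unipotent. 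For (3d), $k=n=p$ means $\tr A=0$ and $r\in\{0,1,\dots,p\}$ with $r\cdot 1=0$; the only solutions are $r=0$ (so $E=0$, $A$ nilpotent) or $r=p$ (so $E=I_p$, $A$ unipotent).

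There is no serious obstacle; the main work is bookkeeping the case split in characteristic $p$, where the non-injectivity of $\bbZ_{\geq 0}\to\bbF$ means $\tr A$ determines $\rank E$ only modulo $p$, and one has to combine this congruence with the a priori bound $\rank E\leq n$ to pin down $E$ (and thus whether $A$ is nilpotent or unipotent) in the extremal cases.
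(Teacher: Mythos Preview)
Your proof is correct and follows essentially the same approach as the paper's: write $A=E+N$, use $\tr N=0$ and $\tr E=\rank(E)\cdot 1$, then combine the congruence $\rank E\equiv k\pmod{p}$ with the bound $0\le\rank E\le n$ to force $E\in\{0,I_n\}$ in the extremal cases. The only difference is cosmetic: you derive the idempotent trace formula directly via diagonalization, whereas the paper cites it as a known lemma.
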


\begin{proof} If $A=E+N$ with $E$ idempotent and $N$ nilpotent then $\tr A=\tr E+\tr N=\tr E$. Moreover it 
is known that if $E$ is an idempotent matrix then $\tr E=\mathrm{rank}(E)\cdot 1$ (see \cite[Lemma 19]{Br-Da-Zh}), 
so there is $k\in\N$, $k\leq n$ such that $\tr A=k\cdot 1$. This $k$ is unique if $\mathrm{char}(\bbF)=0$ and is unique only modulo 
$p$ if $\mathrm{char}(\bbF)=p$, so the statements (2)(a) and (3)(a) are obvious.   

(2)(b) If $k=0$ then $\rank E=0$, hence $E=0$, and this implies that $A$ is nilpotent. The converse is obvious.

(2)(c)  If $k=n$ then $\rank E=n$, hence $E=I_n$, and this implies that $A$ is unipotent. The converse is obvious.

{ (3)(b) Since $k=p$ we have $\tr A=0$, hence $\tr E=0$ and it follows that $p$ is a divisor for $\rank E$. But $\rank E<p$, and it follows 
that $\rank E=0$, hence $A$ is nilpotent. } 

(3)(c) If $k=n$ then $\rank E\equiv n(\mathrm{mod}\ p)$, and it is easy to see that in our hypothesis we have $\rank E=n$, hence $E=I_n$, and this implies that $A$ is unipotent.

(3)(d) If $A=E+N$ where $E=E^2$ is an idempotent and $N$ is nilpotent then $\rank E\cdot 1=0$, hence $\rank E\in\{0,p\}$. 
This implies $E\in\{0_n,I_n\}$.
Now the conclusion is obvious. 
\end{proof}

We are ready to enunciate the promised characterization for nil-clean companion matrices. Before this, let us remark that 
it is easy to verify if a (companion) matrix $C$ is nilpotent or unipotent since these matrices are characterized by the conditions  
$\chi_C=X^n$, respectively $\chi_C=(X-I_n)^n$.

\begin{theorem}\label{main-comp}
Let $C=C_{c_0,c_1,\dots,c_{n-1}}\in\Mat_{n}(\bbF)$ be a companion matrix. The following are equivalent:
\begin{enumerate}[{\rm (1)}]
\item $C$ is nil-clean.
\item One of the following conditions is true:
\begin{enumerate}[{\rm (i)}]
\item $C$ is nilpotent (i.e. $c_0=\dots=c_{n-1}=0$);
\item $C$ is unipotent (i.e. $c_i=(-1)^{i}\binom{n}{n-i}$ for all $i\in\{0,\dots,n-1\}$);
\item $\mathrm{char}(\bbF)=0$ and there exists a positive integer $k$ such that $-c_{n-1}=k\cdot 1$ and $n>k>0$;
\item $\mathrm{char}(\bbF)=p\neq 0$ and there exists an integer $k\in \{1,\dots,p\}$ such that $-c_{n-1}=k\cdot 1$ and $n>k$.
\end{enumerate}
\end{enumerate}
\end{theorem}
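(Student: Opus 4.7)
The plan is to combine Lemma~\ref{e=i} (a necessary trace obstruction) with Proposition~\ref{c=e+m} (which produces the decomposition whenever that obstruction is met). The bridge between the two is the observation that the trace of the companion matrix $C_{c_0,\ldots,c_{n-1}}$ equals $-c_{n-1}$, so the constraint ``$\tr C = k \cdot 1$'' from Lemma~\ref{e=i} is literally the constraint ``$-c_{n-1} = k \cdot 1$'' appearing in (2)(iii)--(iv).

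For (1)$\Rightarrow$(2), I assume $C$ is nil-clean and run Lemma~\ref{e=i}. In characteristic zero, $-c_{n-1} = k \cdot 1$ with $0 \leq k \leq n$; the extremes $k = 0$ and $k = n$ give (i) and (ii) respectively via parts (2)(b)--(c), while the intermediate range $0 < k < n$ is precisely (iii). In characteristic $p > 0$, Lemma~\ref{e=i}(3) supplies $k \in \{1,\ldots,p\}$ with $k \leq n$ or $k = p$; the three boundary scenarios $k = p > n$, $k = n < p$, and $k = n = p$ each collapse into (i) or (ii) via (3)(b), (3)(c), (3)(d) respectively, and all remaining values satisfy $n > k$ and yield (iv).

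For (2)$\Rightarrow$(1), cases (i) and (ii) are immediate: a nilpotent $C$ writes as $C = 0_n + C$, while a unipotent $C$ writes as $C = I_n + (C - I_n)$ with $(C - I_n)^n = 0$ by Cayley--Hamilton applied to $\chi_C = (X-1)^n$. For cases (iii) and (iv), the hypothesis gives $1 \leq k < n$, so $m := n - k \geq 1$ and I apply Proposition~\ref{c=e+m} with this $k$ and with $g = 0$. The output is an idempotent $E$ together with a matrix $M = C - E$ whose characteristic polynomial is $X^n + (k \cdot 1 + c_{n-1}) X^{n-1} = X^n$ (the coefficient vanishes precisely because $-c_{n-1} = k \cdot 1$); hence $M$ is nilpotent and $C = E + M$ is the desired nil-clean decomposition.

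The substantive work is all packaged inside Proposition~\ref{c=e+m}; at this level the only delicate point is the case analysis that funnels the boundary values of $k$ (namely $k = 0$, $k = n$, or $k = p$ when $p \leq n$) into (i) or (ii), leaving the ``generic'' cases (iii)--(iv) to satisfy the strict inequality $n > k$ that Proposition~\ref{c=e+m} requires. The same accounting explains why the index $k$ in (2)(iv) is allowed to take the value $p$: this corresponds exactly to the ``$-c_{n-1} = 0$ but $C$ not nilpotent'' scenario, which Proposition~\ref{c=e+m} still handles as long as $n > p$.
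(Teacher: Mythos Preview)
Your proof is correct and follows essentially the same approach as the paper: both directions hinge on Lemma~\ref{e=i} for the necessary trace condition and on Proposition~\ref{c=e+m} with $g=0$ for the construction, with the extremal values of $k$ funneled into the nilpotent/unipotent cases. The only cosmetic difference is that in characteristic $p$ you invoke Lemma~\ref{e=i}(3)(b)--(d) explicitly to dispose of the boundary cases, whereas the paper re-argues the rank inequality $n\ge\rank E\ge k$ directly; the content is the same.
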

\begin{proof}
(1)$\Rightarrow$(2) Suppose that $C$ is nil-clean, but it is not nilpotent nor unipotent. 

If $\mathrm{char}(\bbF)=0$ then we can use Lemma \ref{e=i} to observe that there exists a unique non-negative integer $k\leq n$ such 
that $-c_{n-1}=k\cdot 1$. Since $C$ is not nilpotent nor unipotent we obtain $0<k<n$. 

If $\mathrm{char}(\bbF)=p>0$ then we can use again Lemma \ref{e=i} to observe that there exists a unique integer $k\in\{1,\dots,p\}$ such 
that $-c_{n-1}=k\cdot 1$. If $0\neq E\neq I_n$ is an idempotent such that $C-E$ is a nilpotent matrix 
then $\rank E\equiv k\ (\mathrm{mod}\ p)$. { Since $\rank E\neq 0$ we have 
$n\geq \rank E\geq k$.} But $E\neq I_n$, hence $\rank E\neq n$, and we obtain $n>k$.   

(2)$\Rightarrow$(1) If $C$ is nilpotent or { unipotent}, then it is obviously nil-clean. Moreover, if we are in one of the casses (iii) or (iv),
we can apply Proposition \ref{c=e+m} for $g=0$ to obtain a nil-clean decomposition for $C$.
\end{proof}

\begin{remark}
(a) The conditions (i)--(iv) stated in Theorem \ref{main-comp} are not independent: if $n>p$ then the nilpotent $n\times n$ companion matrix
verify (iv).

(b) In general the nil-clean decompositions obtained using Proposition \ref{c=e+m} are not the unique nil-clean decompositions for 
companion matrices. For instance, if $c_{n-1}=1$ then $C_{c_0,\ldots,c_{n-1}}-C_{0,\ldots,0}$ is idempotent.       
\end{remark}

\begin{example}
(a) If $n=2$ and $p$ is a prime then the companion matrix $C=C_{c_0,c_1}$ over $\bbF_p$ is nil-clean, 
if and only if $c_0=c_1=0$ ($C$ is nilpotent) or $c_0=1$ and $c_1=-2$ ($C$ is unipotent) or $c_{1}=-1$ (in this case 
$C=\left[\begin{array}{cc} 0 & -c_0 \\ 0 & 1 \end{array}\right]+\left[\begin{array}{cc} 0 & 0 \\ 1 & 0 \end{array}\right]$).

(b) If $n=3$ and $C=C_{c_0,c_1,c_2}$ is a companion matrix over $\bbF_p$ we have the following cases:
\begin{itemize}
\item if $c_2=0$, $C$ is nil-clean exactly in one of the following situations: 
\begin{enumerate}[{\rm (i)}] 
\item $p=2$,
\item $p=3$ and $C$ is unipotent or nilpotent,  
\item $p\geq 5$ and $C$ is nilpotent;
\end{enumerate}

\item if $c_2=-1$ then $C$ is nil-clean since 
$N=\left[\begin{array}{ccc}
-1 & c_1-1 & 2c_1-1\\
1 & 0 & -c_1 \\
0 & 1 & 1
\end{array}\right]$
is nilpotent and $C-N$ is idempotent; 

\item if $c_2=-2$ then $C$ is nil-clean since 
$N=\left[\begin{array}{ccc}
-1 & 0 & 1\\
1 & -1 & p-3 \\
0 & 1 & 2
\end{array}\right]$
is nilpotent and $C-N$ is idempotent;

\item if $p\geq 5$ and $c_2=-3$ then $C$ is  nil-clean if and only if it is unipotent;

\item if $p\geq 5$ and $c_2=-k$ with $k\in \{4,\dots,p-1\}$ then $C$ is not nil-clean.
\end{itemize} 

(c) Finally, we illustrate the decompositions obtained by using Proposition \ref{c=e+m} for the case $n=4$ and $p=3$:
\begin{itemize}
\item for $c_3=0$ we obtain that 
$$N=\left[\begin{array}{cccc} -1 & 0 & 0 & -1\\
										1 & -1 & 0 & 1 \\
										0 & 1 & -1 & 0 \\
										0 & 0 & 1 & 0
\end{array}\right]$$
is nilpotent and $C_{c_0,c_1,c_2,0}-N$ is idempotent;

\item for $c_3=1$ we obtain that 
$$N=\left[\begin{array}{cccc} -1 & c_2-1 & c_1-c_2-1 & -c_1-c_2^2-1\\
										1 & 0 & 0 & -c_1 \\
										0 & 1 & 0 & -c_2 \\
										0 & 0 & 1 & 1
\end{array}\right]$$
is nilpotent and $C_{c_0,c_1,c_2,1}-N$ is idempotent;

\item for $c_3=-1$ we obtain that 
$$N=\left[\begin{array}{cccc} -1 & 0 & 0 & -1\\
										1 & -1 & c_2 & 1 \\
										0 & 1 & 0 & -c_2 \\
										0 & 0 & 1 & -1
\end{array}\right]$$
is nilpotent and $C_{c_0,c_1,c_2,0}-N$ is idempotent. 
\end{itemize} 
 
\end{example}

As a corollary of this theorem we obtain a generalization for \cite[Theorem 1]{Br-et-al-13}:

\begin{corollary}\label{prime-fields}
Let $n\geq 3$ be a positive integer. The following are equivalent for a field $\bbF$:
\begin{enumerate}[{\rm (1)}]
\item $\bbF\cong \bbF_p$ for a prime  $p<n$; 
\item every companion matrix $C\in \Mat_{n}(\bbF)$ is nil-clean;
\item if  $C\in \Mat_{n}(\bbF)$ is a companion matrix then for every polynomial $g\in\bbF[X]$ of degree at most $n-2$
there exist two matrices $E,M\in \Mat_{n\times n}(\bbF)$ such that $C=E+M$,
$E$ is idempotent, and
$\chi_M=X^n+g$.
\end{enumerate}
\end{corollary}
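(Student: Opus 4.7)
The plan is to establish the equivalence cyclically, as (3)$\Rightarrow$(2)$\Rightarrow$(1)$\Rightarrow$(3). The implication (3)$\Rightarrow$(2) is immediate: specializing (3) to $g=0$ yields $C=E+M$ with $E$ idempotent and $\chi_M=X^n$, so $M^n=0$ by Cayley--Hamilton and $C$ is nil-clean.

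For (1)$\Rightarrow$(3), assume $\bbF=\bbF_p$ with $p<n$, fix a companion matrix $C=C_{c_0,\ldots,c_{n-1}}$, and fix $g\in\bbF[X]$ with $\deg g\leq n-2$. The idea is to apply Proposition \ref{c=e+m}, whose conclusion produces decompositions $C=E+M$ with $\chi_M=X^n+(k\cdot 1+c_{n-1})X^{n-1}+g$ for any chosen split $n=m+k$ with $m,k\geq 1$. To obtain $\chi_M=X^n+g$ I need $k\cdot 1+c_{n-1}=0$ in $\bbF_p$, i.e., $k\equiv -c_{n-1}\pmod p$. Choose $k$ as the representative of $-c_{n-1}$ in $\{1,\ldots,p\}$ (taking $k=p$ if $-c_{n-1}=0$); since $p<n$, automatically $k\leq p\leq n-1$, so the hypothesis $n=m+k$ with $m,k\geq 1$ is satisfied and a single application of Proposition \ref{c=e+m} suffices.

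The substantial direction is (2)$\Rightarrow$(1). I will rule out each of the three ways $\bbF$ could fail to be $\bbF_p$ with $p<n$ by exhibiting a companion matrix that is not nil-clean, invoking the classification of Theorem \ref{main-comp}. If $\mathrm{char}(\bbF)=0$, the set $\{0\cdot 1,\ldots,n\cdot 1\}$ of admissible values for $-c_{n-1}$ is finite while $\bbF$ is infinite, so choosing $c_{n-1}$ outside this set forces all four cases of Theorem \ref{main-comp} to fail. If $\mathrm{char}(\bbF)=p>0$ and $\bbF\supsetneq\bbF_p$, every $k\cdot 1$ lies in $\bbF_p$; picking $c_{n-1}\in\bbF\setminus\bbF_p$ then rules out cases (i), (ii), (iv), irrespective of the other $c_i$'s. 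Finally, when $\bbF=\bbF_p$ with $p\geq n$, I split into $p>n$ and $p=n$. For $p>n$, take $c_{n-1}=-n$ and $c_0=0$ (remaining entries arbitrary): the matrix is neither nilpotent ($c_{n-1}\neq 0$) nor unipotent ($c_0\neq (-1)^n$), while the unique $k\in\{1,\ldots,p\}$ with $k\equiv n\pmod p$ is $k=n$, violating the strict inequality $k<n$ required by (iv). For $p=n\geq 3$, take $c_{n-1}=0$ and $c_0\in\bbF_p\setminus\{0,(-1)^n\}$, which is nonempty because $|\bbF_p|=p\geq 3$; here $-c_{n-1}=0$ forces $k=p=n$ in (iv), again violating $k<n$.

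The main obstacle lies in the boundary case $p=n$ of (2)$\Rightarrow$(1). By the Frobenius identity $(X-1)^p=X^p-1$, the unipotent companion polynomial degenerates to $X^n-1$, so a candidate witness must be chosen carefully to avoid both the unipotent and nilpotent possibilities while also failing case (iv); this is precisely where the standing hypothesis $n\geq 3$ enters, ensuring $|\bbF_p|\geq 3$ and hence the existence of a suitable $c_0$.
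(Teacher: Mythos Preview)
Your proof is correct and follows the same cyclic scheme as the paper; the implications (3)$\Rightarrow$(2) and (1)$\Rightarrow$(3) are handled identically (specialize $g=0$, respectively pick $k\in\{1,\dots,p\}$ with $k\equiv -c_{n-1}\pmod p$ and apply Proposition~\ref{c=e+m}).

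The only difference is in (2)$\Rightarrow$(1). The paper first invokes Lemma~\ref{e=i} directly: since every element of $\bbF$ arises as the trace $-c_{n-1}$ of some companion matrix, nil-cleanness forces every element of $\bbF$ to have the form $k\cdot 1$, so $\bbF\cong\bbF_p$; it then disposes of the case $p\geq n$ with the \emph{single} witness $C_{(-1)^{n+1},0,\ldots,0}$, which is neither nilpotent ($c_0\neq 0$) nor unipotent ($c_0\neq(-1)^n$ since $p\neq 2$) and fails (iv) because $-c_{n-1}=0$ forces $k=p\geq n$. You instead bypass Lemma~\ref{e=i} and work entirely through Theorem~\ref{main-comp}, treating characteristic $0$, $\bbF\supsetneq\bbF_p$, $p>n$, and $p=n$ separately. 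Your route is slightly longer---in particular the split $p>n$ versus $p=n$ is unnecessary, as the paper's single example covers both---but it has the minor virtue of showing that Theorem~\ref{main-comp} alone suffices, without a separate appeal to the trace lemma.
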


\begin{proof}
(1)$\Rightarrow$(3) For every companion matrix $C$ we can write $\tr C=k\cdot 1$ with $k\in\{1,\dots,p\}$, and we can apply Proposition \ref{c=e+m}.

(3)$\Rightarrow$(2) This is obvious.

(2)$\Rightarrow$(1) Since every element of the field $\bbF$ can be the trace of a companion matrix, it follows from Lemma \ref{e=i} that 
every element from $\bbF$ has the form $k\cdot 1$, $k\in\mathbb{N}$. This implies that there exists a prime $p$ such that $\bbF\cong \bbF_p$.
Moreover, if we supose $p\geq n$, then use can use Theorem \ref{main-comp} to observe that the companion matrix $C=C_{(-1)^{n+1},0,\ldots,0}$ is not nil-clean (it is not nilpotent nor unipotent, and it does not verify the condition (iv) from Theorem \ref{main-comp}). Therefore $p<n$, and the proof is complete.  
\end{proof}

\begin{remark}
It is not hard to see that if 
all companion matrices which appear in the Frobenius normal form of a matrix $A$ are nil-clean then $A$ is also nil-clean.
It would be nice to know if the converse of this remark is also true. For a proof of a very particular case we refer to 
\cite{Br-Da-Zh}, where it is proved that for $p=3$ and every $n\geq 2$ the matrix 
$A_n=\mathrm{diag}(1,-1,0,\dots,0)\in \Mat_n(\bbF_3)$ 
is not nil-clean. Note that the Frobenius normal form for the matrix $A_4$ is 
$\mathrm{diag}(0;C_{0,-1,0})$, where $0\in\Mat_1(\bbF_3)$ 
is obviously nil-clean and 
$C_{0,-1,0}\in \Mat_3(\bbF_3)$ is not nil-clean. 

On the other side, let us remark that if we consider only diagonals of companion 
matrices (non necessarily Frobenius normal forms) it is possible to obtain nil-clean matrices 
even the companion matrices are not nil-clean. 
For instance it is not hard to see that the minimal polynomial of the matrix 
$A=\mathrm{diag}(C_{0,0};C_{-1,0})\in \Mat_{4}(\bbF_3)$ is of degree $4$. Therefore $A$ is 
nil-clean since it is similar to a $4\times 4$ companion matrix over $\bbF_3$. 
On the other side $C_{-1,0}\in \Mat_2(\bbF_3)$ is not nil-clean.
\end{remark}

\noindent \textbf{Acknowledgements.} This study has the starting point in the work of \c Stefana Sorea, \cite{Stef}. 
She proved by some direct computations that all $4\times 4$ companion matrices over $\bbF_3$ are nil-clean.

We would like to thank to the referee for his/her critical remarks, which helped us to improve the paper.


\begin{thebibliography}{99}
\bibitem{Ahn-An-06} M.-S Ahn, D.D. Anderson, {\it Weakly clean rings and almost clean rings}, Rocky Mountain J. Math. {\bf 36} (2006),
783--798.

\bibitem{Aro} N. Arora, S. Kundu, {\it Semiclean rings and rings of continuous functions}, 
Journal of Commutative Algebra {\bf 6} (2014), 1--16.

\bibitem{BDD} G. Borooah, A. J. Diesl, T. J. Dorsey, {\it
Strongly clean matrix rings over commutative local rings}, J. Pure Appl. Algebra, {\bf212} (2008), 281--296.

\bibitem{Br-et-al-13}
S. Breaz, G. C\u alug\u areanu, P. Danchev and T. Micu, {\it Nil-clean matrix rings}, Linear Algebra Appl. {\bf 439} (2013), 3115--3119.


\bibitem{Br-Da-Zh} S. Breaz, P. Danchev, Y. Zhou,
{\it Rings in which every element is either a sum or a difference of a nilpotent and an idempotent}, preprint  	arXiv:1412.5544 [math.RA].





\bibitem{camilo-02} 
V. Camillo, J.J. Sim\'on, 
{\it The Nicholson-Varadarajan theorem on clean linear transformations},
Glasg. Math. J. 44 (2002), 365--369.

\bibitem{C14}
H. Chen, {\it Matrix nil-clean factorizations over abelian rings}, preprint arXiv:1407.7509 [math.RA]


\bibitem{CYZ06}
J. Chen, X. Yang, Y. Zhou, {\it  When is the $2\times 2$ matrix ring over a commutative local ring strongly clean?}
J. Algebra {\bf 301} (2006), 280-293.



\bibitem{Cr-09} 
G. Cravo, 
{\it Matrix completion problems}, 
Linear Algebra Appl. 430 (2009), 2511--2540.

\bibitem{Da-Mc-15}
P.V. Danchev and W.Wm. McGovern, {\it Commutative weakly nil clean unital rings}, preprint.

\bibitem{Diesl-13}
A.J. Diesl,  {\it Nil clean rings}, J. Algebra {\bf 383} (2013), 197--211.


\bibitem{HN}
J. Han, W.K.  Nicholson, {\it Extensions of clean rings}, Commun. Algebra {\bf 29} (2001), 2589--2595.



\bibitem{kosan-et-al-14}
M.T. Kosan, T-K. Lee, Y. Zhou, {\it When is every matrix over a division ring a sum of an idempotent and a nilpotent?}, Linear Algebra Appl. {\bf 450} (2014), 7--12.




\bibitem{Nic-99}  W.K. Nicholson, {\it Strongly clean rings and Fitting’s lemma}, Comm. Algebra 27 (1999), 3583--3592.

\bibitem{Nic-pams-78} 
W.K.Nicholson, K. Varadarajan,
{\it Countable linear transformations are clean},
Proc. Am. Math. Soc. 126 (1998), 61--64.

\bibitem{Nic-tams-77}
W.K. Nicholson, {\it Lifting idempotents and exchange rings}, Trans. Amer. Math. Soc. {\bf 229} (1977), 269--278.




\bibitem{NZ04}
W.K. Nicholson and Y. Zhou, {\it Rings in which elements are uniquely the sum of an idempotent and a unit}, Glasgow Math. J. {\bf 46} (2004), 227--236.


\bibitem{Stef} S. Sorea, {\it Identities in rings}, Bachelor Thesis, Babe\c s-Bolyai University, 2014.



\bibitem{Ya-Zh}  X. Yang, Y. Zhou, {\it Strong cleanness of the $2\times 2$ matrix ring over a general local ring},
J. Algebra 320 (2008), 2280--2290.


\end{thebibliography}
 \end{document}